\definecolor{MyLinkColor}{rgb}{0,0,0.4}
\newcommand{\sign}{\mathop{\rm sign}\nolimits}
\newcommand{\0}{\Omega}
\newcommand{\p}{\partial}
\newcommand{\R}{\mathbb{R}}
\newcommand{\N}{\mathbb{N}}
\newcommand{\Z}{\mathbb{Z}}
\newtheorem{thm}{Theorem}[section]
\newtheorem{prop}[thm]{Proposition}
\newtheorem{lemma}[thm]{Lemma}
\theoremstyle{remark}
\numberwithin{equation}{section}
\title[On particle motion in geophysical deep water waves]
{On the particle motion in geophysical deep water waves traveling over uniform currents}
\subjclass[2010]{ Primary: 76B15; Secondary: 74G05, 37N10.}
\keywords{Gravity deep-water waves, Gerstner's wave, Coriolis effects, Lagrangian coordinates}
\author[Anca--Voichita Matioc]{Anca--Voichita Matioc}
\address{Faculty of Mathematics,  University of Vienna, Nordbergstrasse 15, 1090 Vienna, Austria.}
\email{anca.matioc@univie.ac.at}
\begin{document}

\begin{abstract}
We describe a family of exact Gerstner type solutions for the geophysical equatorial deep water wave problem in the $f-$plane approximation. 
These Gerstner type waves are two-dimensional and  travel with constant speed over a uniform horizontal current.
The particle paths in the presence and absence of the Coriolis force are also analyzed in dependence of the current strength. 
\end{abstract}

\maketitle

\section{Introduction}
Analyzing the motion of an object situated  in a  reference frame rotating with uniform angular velocity one has to take into consideration the Coriolis force acting upon the object.
In particular, when studying geophysical water waves traveling over an inviscid  fluid at the Earth's surface, due to the rotation of the Earth around its axis, the
 Coriolis  force  influences the motion of the fluid particles, cf. \cite{GalStR07, Pe79}
 adding also additional terms in the Euler equations. 

There exists an explicit solution for gravity deep water waves  without Coriolis effect, which was found first by Gerstner \cite{GE09} and later on by  Rankine \cite{Ra63}.
Its features have been analyzed  in \cite{Co01, He08} and more recently in \cite{AM12xx} where the effects caused by the Earth's rotation are also considered.
Gerstner's solution    is in fact the unique solution for the geophysical  deep water wave  that has no stagnation points and with the pressure constant along the streamlines, cf. \cite{Ka04, MM12x}. 
It is given by  describing the evolution of each individual fluid particle in the flow: all fluid particle move  on  circles,  the  radii of the circles decreasing with the depth.  
Moreover, the flow is rotational and the vorticity decays with depth.

That for Gerstner's wave  fluid particles move on circles is in agreement with the  
 classical description of the particle paths  within the framework of linear water wave theory \cite{Joh97,  Li78,  St92}: all water particles
trace a circular orbit, the diameter of which decreases with depth.
However, it was recently shown within linear theory \cite{CoEhVi08, AM12} that for irrotational periodic water waves the particle paths are not closed. 
Even within the linear water wave theory, the ordinary
differential equations system describing the motion of the fluid particles is nevertheless nonlinear and explicit solutions of
this system are not available.
However, qualitative features of the underlying flow have been obtained in a nonlinear setting in \cite{Con11,  Co-Es04_1, He06,  BM12xx, Mat12,  To96}.
The particle trajectories and other properties for the flow beneath water waves of finite depth have been discussed in \cite{ Co06, CoEhWa07, CoEs04_b,  AC11, CoVi08, DH07, ioK08, MaA10,okamoto-shoji-01}, to mention just some of the contributions.
  
It is worth mentioning that there exists an explicit solution describing the propagation of edge-waves along a sloping beach which is obtained in \cite{C01a} by adapting Gerstner's solution. 
Gerstner's idea may be used to construct an explicit solution for non-homogeneous deep water waves and for edge-waves  along a sloping beach cf. \cite{Ra11}.    
More recently, in \cite{AM12a} the author gave an explicit solution describing  geophysical equatorial edge waves propagating over a plane sloping beach with the shoreline parallel to the Equator.

For waves localized close to the Equator, the mathematical formulation of the geophysical water wave problem  is the so-called $f-$plane approximation, the Coriolis parameter being constant in this case.
For the physical relevance of our setting we refer to  the discussion in \cite{AC12}.
In this paper we analyze a family of explicit Gerstner type solutions, for the  geophysical deep water wave problem  in the $f-$plane approximation, which travel over an uniform horizontal current.
These solutions are presented  in a Lagrangian framework by describing the path of each individual fluid particle.
They   appeared before in a implicit form in \cite{MC78} in the context of gravitational and geostrophic billows.
In the absence of an underlying current, the solutions presented herein coincide with those studied in \cite{Co01, He08, AM12xx}.
In this case the particle trajectories are circles. 
In this paper we investigate the influence of the   underlying current  on the  wave and the  flow beneath it.
Particularly, we prove that they possess the following features:
\begin{itemize}
 \item the waves propagate along the Equator and are geometrically reflections of curtate cycloids or of cycloids with cusps with respect to a horizontal line. 
They have  wavelength $\lambda=2\pi/k$,  $k\in\N.$ 
Their form is independent of the strength $U$ of the uniform current, but the wave speed $c$ is $U$-dependent;
\item depending on their position in the flow and the value of $U,$ when $U\neq0,$ the fluid particles are looping or undulate in the direction of the current (also when the current and the wave move in opposite directions).
Their horizontal drift though is determined only by $U$ (and  $c$ for geophysical waves), and is uniform with respect to the position of the particles within the wave;   
\item the flows are rotational and the absolute value of the vorticity decreases with depth. 
Its value is proportional to $U-c$ for geophysical waves, whereas in the absence of Coriolis effects it is only the sign of the vorticity which changes with $U-c;$   
\item the influence of the underlying current on the flow beneath the wave depends in the geophysical context on the direction of propagation of the wave along the Equator. 
There is a limitation on the value of $U$ above which our functions  stop to describe solutions of the deep water wave problem.
For the limit value of $U$ the solutions describe waves propagating from east to west with speed $c=U$ and consisting only of stagnation points. 
\end{itemize}

The outline of the paper is the following: we present  in Section 2 the governing equations in the $f-$plane approximation  and, using a Lagrangian description, we introduce 
the family of Gerstner type solutions traveling over uniform horizontal currents. 
In Section \ref{Sec:2} we prove that these solutions verify the equations of motion and finally, in Section \ref{Sec:3},  we analyze and illustrate the paths of the particles in dependence of the current strength,
 in the absence of the Coriolis force and for  geophysical waves, respectively. 

\section{The mathematical model and the Gerstner type solutions}\label{Sec:1}
In a rotating frame with the origin at a point on Earth's surface, with the $X-$axis chosen horizontally due east, the $Y-$axis horizontally due north and the $Z-$axis  upward, 
we consider waves traveling over a fluid of infinite depth, the wave surface being the graph  $Z=\eta(t,X,Y)$.
For a fluid layer localized near the Equator, the governing equations in the $f-$plane approximation are the Euler equations
\begin{subequations}\label{Pb1}
\begin{equation}\label{Euler}
\left\{
\begin{array}{rllllll}
u_t+uu_X+vu_Y+wu_Z+2\omega w&=&-P_X/\rho \\
v_t+uv_X+vv_Y+wv_Z&=& -P_Y/\rho,\\
w_t+uw_X+vw_Y+ww_Z-2\omega u&=&-P_Z/\rho -g,
\end{array}
\right.
\end{equation}
and the equation of mass conservation 
\begin{equation}\label{maco}
u_X+v_Y+w_Z=0.
\end{equation}
Here $t$ represents time, $(u,v,w)$ is the fluid velocity, $\omega=73\cdot 10^{-6} rad/s$ is the (constant) rotational speed of the Earth\footnote[1]{Taken to be a perfect sphere of radius $6371 km$.} round the polar axis
towards east, $\rho$ is the (constant) density of the water, $g=9,8 m/s^2$ is the  gravitational constant  at the Earth's surface, and $P$ is the pressure.  
The free surface decouples the motion of the water from that of the air  by the dynamic boundary condition
\begin{equation}\label{Dbc}
P=P_0 \quad \quad \text{on} \quad Z=\eta(t,X,Y),
\end{equation}
where $P_0$ is the constant atmospheric pressure.
Since the same particles always form the free surface, we also have the kinematic
boundary condition
\begin{equation}\label{kbc}
w=\eta_t+u\eta_X \quad \quad \text{on} \quad Z=\eta(t,X,Y).
\end{equation}
Finally, assuming that at great depths the flow is almost horizontal we impose that
\begin{equation}\label{B2}
(u,w)\to (U,0) \quad \quad \text{for $ Z \to-\infty$ uniformly in $(t,X)$.}
\end{equation}
\end{subequations}

In this paper we study  traveling wave solutions of \eqref{Pb1}, that is solutions with the velocity field, the pressure, and the 
free surface exhibiting a $(t,X)-$dependence of the form $(X-ct),$ where $|c|$ is the speed of the wave surface.  
If $c>0,$ then the wave moves from west to east, while if $c<0$ it moves in the opposite direction, with constant speed $|c|.$
Moreover, we seek two-dimensional flows, independent upon the $Y-$coordinate and with $v \equiv 0$ throughout the flow, that is solutions of 
\begin{equation}\label{Pb}
\left\{
\begin{array}{rrlllll}
u_t+uu_X+wu_Z+2\omega w&=&-P_X/\rho, &\text{in $ Z<\eta(X-ct),$} \\
w_t+uw_X+ww_Z-2\omega u&=&-P_Z/\rho -g,&\text{in $ Z<\eta(X-ct),$}\\
u_X+w_Z&=&0&\text{in $ Z<\eta(X-ct),$}\\
P&=&P_0&\text{on $ Z=\eta(X-ct),$}\\
w&=&\eta_t+u\eta_X&\text{on $ Z=\eta(X-ct),$}\\
(u,w)&\to& (U,0) & \text{for $ Z \to-\infty$ uniformly in $(t,X)$.}
\end{array}
\right.
\end{equation}
Note that formally putting   $\omega=0$ in  \eqref{Pb}, that is neglecting Coriolis force, 
the system \eqref{Pb} reduces to the classical problem for two-dimensional deep water waves.

We describe now a family of exact solutions to the water wave problem \eqref{Pb}.
We will allow for $\omega=0$, so that in this case our solutions describe deep water waves  propagating in the absence of Coriolis effects. 
These special, Gerstner type, solutions of the problem \eqref{Pb} are given in a Lagrangian framework  by describing the paths of all fluid particles within the wave
\begin{equation}\label{Lag}
\left\{ 
\begin{array}{llll}
&\displaystyle X(t,a,b):=a+\left(c-m\right)t-\frac{e^{kb}}{k}\sin(k(a-mt)),\\[1ex]
&\displaystyle Z(t,a,b):= b+\frac{e^{kb}}{k}\cos(k(a-mt))
\end{array}
\right.
\end{equation}
 for all  $a\in\R$, $b\leq b_0,$ and $t\geq0.$  
Hereby, the wave number $k$ is assumed positive and 
\begin{equation}\label{regime}
b_0\leq 0. 
\end{equation} 
Each particle within the fluid is uniquely determined by a pair  $(a,b)$ with $b\leq b_0,$
the curve $(X(t, \cdot, b_0), Z(t, \cdot, b_0))$ being  the profile of the wave. 
The path of each individual particle is  described by the mapping  $(X( \cdot, a, b_0), Z( \cdot, a, b_0))$.
As we shall see in Section \ref{Sec:2}, the equations \eqref{Lag} describe deep water waves traveling over a uniform horizontal current $(U,0),$ with $U=c-m,$
  the wave speed $c,$ and the parameter $m$ satisfying certain constraints. 
Depending on $c$ and $m$, the current may be favorable, that is the wave and the current  move in the same direction,
 or  adverse, if the current moves  in the  opposite direction to that of wave propagation.

\section{Analysis of the Gerstner's  family of solutions}\label{Sec:2}
To keep our notation short we introduce the parameter set $\Sigma:=\R\times(-\infty,b_0).$
First, we prove that at any time $t\geq0$ the set $\{(X(t,a,b), Z(t,a,b))\,:\, (a,b)\in\Sigma\}$ describe an infinite 
fluid layer bounded from  above by the free surface  $(X(t,\cdot,b_0), Z(t,\cdot,b_0))$, which is a graph when assuming $b_0\leq0.$
\begin{lemma}\label{L:1}
 Given  $t\geq 0,$ the map $\Phi(t):=(X(t), Z(t))$ defines a real-analytic diffeomorphism from $\Sigma$ onto  its image $\0(t).$
Moreover, there exists a function $\eta:\R\to\R,$ which is periodic of minimal period $2\pi/k$ such that 
\[
\0(t)=\{(X,Z)\,:\, \text{$X\in\R$  and $Z<\eta(t,X):=\eta(X-ct)$}\},
\] 
with $\Phi(t):\partial \Sigma \to \partial \Omega$ being a bijective map.
\end{lemma}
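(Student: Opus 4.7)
The plan is to combine the inverse function theorem with two monotonicity arguments. First I would compute the partials
\[
\p_a X = 1 - e^{kb}\cos\theta, \quad \p_b X = -e^{kb}\sin\theta, \quad \p_a Z = -e^{kb}\sin\theta, \quad \p_b Z = 1 + e^{kb}\cos\theta,
\]
with $\theta := k(a-mt)$, obtaining $\det D\Phi(t) = 1 - e^{2kb}$. Since $b < b_0 \leq 0$ throughout $\Sigma$, this is strictly positive, so $\Phi(t)$ is a real-analytic local diffeomorphism from $\Sigma$ onto an open subset of $\R^2$.

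Next I would reduce to $t = 0$ via the substitution $\xi := a - mt$: a short calculation shows
\[
X - ct = \xi - \frac{e^{kb}}{k}\sin(k\xi), \qquad Z = b + \frac{e^{kb}}{k}\cos(k\xi),
\]
so $(X-ct, Z)$ depends only on $(\xi, b)$. This already gives the translation covariance $\0(t) = \0(0) + (ct, 0)$, so any graph representation of the boundary must involve $X - ct$. Setting $b = b_0$, the derivative with respect to $\xi$ of the first component is $1 - e^{kb_0}\cos(k\xi) \geq 0$; this is strictly positive if $b_0 < 0$ and vanishes only at isolated points if $b_0 = 0$, and in either case yields a continuous bijection $\R \to \R$ whose inverse produces a function $\eta$ of minimal period $2\pi/k$.

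For global injectivity of $\Phi(t)$ on $\Sigma$, fix $b < b_0$: then $\p_a X > 0$ makes $a \mapsto X(t,a,b)$ a real-analytic diffeomorphism $\R \to \R$ with inverse $a = A(X, b)$. For $X_0 \in \R$ fixed, I would analyze $G(b) := Z(t, A(X_0, b), b)$ on $(-\infty, b_0)$; implicit differentiation of $X(t, A(X_0, b), b) = X_0$ combined with the Jacobian computation yields
\[
G'(b) = \frac{\det D\Phi(t)}{\p_a X} = \frac{1 - e^{2kb}}{1 - e^{kb}\cos\theta} > 0.
\]
The limits $G(b) \to -\infty$ as $b \to -\infty$ (since $b \to -\infty$ while $e^{kb}/k \to 0$) and $G(b) \to \eta(X_0 - ct)$ as $b \to b_0^-$ by continuity make $G$ a strictly increasing bijection $(-\infty, b_0) \to (-\infty, \eta(X_0 - ct))$. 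Thus each $(X_0, Z_0)$ with $Z_0 < \eta(X_0 - ct)$ has exactly one preimage, namely $b = G^{-1}(Z_0)$ and $a = A(X_0, b)$. Combined with the local diffeomorphism property this produces the desired real-analytic diffeomorphism onto $\0(t) = \{(X, Z) : Z < \eta(X - ct)\}$, and the boundary bijection is precisely the parameterization discussed above.

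The most delicate point is the limit case $b_0 = 0$: there the Jacobian degenerates on $\p \Sigma$ and $\eta$ develops cusps, so the boundary map is no longer $C^1$. However, since $\Sigma = \R \times (-\infty, b_0)$ is open, the Jacobian remains strictly positive throughout $\Sigma$, and the parameterization of $\p \0(t)$ stays a continuous bijection — which is all that the lemma actually asserts.
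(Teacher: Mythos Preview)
Your argument is correct and follows essentially the same route as the paper's proof: both invert $a\mapsto X(t,a,b)$ for fixed $b$, then show that along each vertical fiber $X=X_0$ the height $Z$ is strictly increasing in $b$ via the identity $G'(b)=\det D\Phi(t)/\p_a X=(1-e^{2kb})/(1-e^{kb}\cos\theta)$, and combine this with the limit $b\to-\infty$ to obtain the global bijection onto a subgraph region. Your presentation adds a useful explicit treatment of the borderline case $b_0=0$ and establishes the $(X-ct)$--dependence up front via the substitution $\xi=a-mt$, whereas the paper recovers it afterwards from the corresponding identity for the inverse; these are cosmetic differences only.
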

\begin{proof} Let us note that for each $b\leq b_0$ and $t\geq0,$ the mapping $\R\ni a\mapsto X(t,a,b)\in\R$ is a diffeomorphism. 
Let $X^{-1}(t,\cdot,b):\R\to\R$ denote its inverse.
The image of $\Phi(t)$ is then the set
\[\0(t)=\{(X, Z(t, X^{-1}(t,X,b),b))\,:\, (X,b)\in\R\times(-\infty,b_0)\}.\]
Since 
\begin{align*}
\frac{\p}{\p b}\left(Z(t, X^{-1}(t,X,b),b))\right)&=\frac{\det \p\Phi}{X_a}(t, X^{-1}(t,X,b),b))\\
&=\frac{1-e^{2kb}}{1-e^{kb}\cos(k(a-mt))}\big|_{a=X^{-1}(t,X,b)}>0
\end{align*}
for all $t\geq0$ and $(X,b)\in\R\times(-\infty,b_0)$, and $Z(t, X^{-1}(t,X,b),b)\to_{b\to-\infty}-\infty$ we conclude that $\Phi(t)$
maps $\Sigma$ bijectively onto
\[\0(t)=\{(X, Z)\,:\, Z<Z(t, X^{-1}(t,X,b_0),b_0))\}.\]

Thus, at time $t\geq0,$ the wave surface is the graph $Z=\eta(t,X).$
Letting $\eta:=\eta(0,\cdot),$ to show that $\eta(t,X)=\eta(X-ct)$ reduces to prove that
\[
X^{-1}(t,X,b_0)-mt=X^{-1}(0,X-ct,b_0)\qquad\text{for all $t\geq0$ and $X\in\R$.}
\]
The latter identity can be easily obtained from \eqref{Lag}.
Similarly, that $\eta$ has minimal period $T$ reduces to finding a constant $T>0$ such that 
 \[X^{-1}(0,X+T,b_0)=X^{-1}(0,X,b_0)+\frac{2\pi}{k},\]
and it follows readily from \eqref{Lag} that the minimal period is $T=2\pi/k.$
\end{proof}

Observe  that the constants $c$ and $m$ do not interfere in the definition of $\eta,$ so that the shape of the wave surface is independent of these constants.
Similarly as in the case studied in \cite{Co01, He08, AM12xx}, the wave surface is the reflection of a curtate cycloids or of a cycloid with cusps with respect to a horizontal line (see also the discussion at the beginning of Section \ref{Sec:3}).

We emphasize that by Lemma \ref{L:1}, the constant $c$ is the only candidate for the wave speed. 
As we shall see in Lemmas \ref{L:2a}, \ref{L:2b}, and \ref{L:3} below, $c$ is indeed the wave speed constant.
To prove this we have to find first a pressure $P$ so that all equations of \eqref{Pb} are satisfied and show also that $(u,v,P)$ has a $(t,X)$ dependence of the form $(X-ct).$  

To proceed, since the paths of the particles are described by \eqref{Lag}, we see that the velocity  vector field within the fluid layer is given at any time $t\geq0$
 by the relations
\begin{equation}\label{velocity}
\left\{
\begin{array}{llll}
 u(t,X,Y)=X_t(t,a,b)|_{(a,b)=\Phi(t)^{-1}(X,Y)},\\
 v(t,X,Y)=Z_t(t,a,b)|_{(a,b)=\Phi(t)^{-1}(X,Y)}
\end{array}
\right.\qquad\text{for $(X,Y)\in\overline{\0(t)}.$}
\end{equation}

In the next lemma we show that, under certain restrictions on $c$ and $m$, there exists a pressure $P$, which depends on $(X, Z)$  only through  the parameter $b$, 
such that the Euler equations are satisfied.
We will differentiate now between flows with Coriolis effects and flows where $\omega=0$. 
As $(X, Z)$ are given explicitly by using the parameters $(a,b)$, it is more natural to work with the latter variables  when determining the pressure of the flow (also the vorticity, cf. Lemma \ref{L:5}).

\begin{lemma}\label{L:2a} If $\omega=0,$ we assume that 
\begin{equation}\label{C:1}
m^2= \frac{g}{k}.
\end{equation}
  Then, letting 
\begin{equation}\label{pre}
 P:=\frac{\rho m(km+2\omega)}{2k}(e^{2kb}-e^{2kb_0})+\left[2\omega\rho (c-m)-g\rho\right](b-b_0)+P_0,\qquad b\leq b_0,
\end{equation}
the first two equations of \eqref{Pb} are satisfied for all $t\geq0 $ and all $c\in\R.$
\end{lemma}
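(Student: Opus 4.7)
The plan is to convert the Eulerian momentum equations into identities in the Lagrangian labels $(a,b)$, where every quantity is an explicit elementary function of $(a,b,t)$. Since by Lemma \ref{L:1} the map $\Phi(t)$ is a diffeomorphism and the velocity is given by \eqref{velocity} as $u=X_t$, $w=Z_t$, the material derivative along a particle path is simply $\partial_t$ at fixed $(a,b)$. With $\omega=0$ the first two equations of \eqref{Pb} therefore read $X_{tt}=-P_X/\rho$ and $Z_{tt}=-P_Z/\rho-g$, and multiplying these by $(X_a,Z_a)$ and by $(X_b,Z_b)$ respectively, together with the chain-rule relations $P_a=P_XX_a+P_ZZ_a$ and $P_b=P_XX_b+P_ZZ_b$, turns them into the equivalent pair of $(a,b)$-identities
\[
P_a = -\rho\bigl[X_{tt}X_a + (Z_{tt}+g)Z_a\bigr],\qquad P_b = -\rho\bigl[X_{tt}X_b + (Z_{tt}+g)Z_b\bigr].
\]

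Since the candidate $P$ in \eqref{pre} depends only on $b$ when $\omega=0$, the verification splits into two checks: (i) the right-hand side of the first identity vanishes identically in $(a,b,t)$, and (ii) the right-hand side of the second identity coincides with the $b$-derivative of \eqref{pre}. From \eqref{Lag} I obtain $X_{tt}=km^2e^{kb}\sin(k(a-mt))$ and $Z_{tt}=-km^2e^{kb}\cos(k(a-mt))$, together with the elementary $a$- and $b$-derivatives of $(X,Z)$. Substituting these into (i) and collapsing the cross terms via $\sin^2+\cos^2=1$ should leave precisely $(km^2-g)e^{kb}\sin(k(a-mt))$, so condition (i) is forced to hold exactly by the dispersion relation \eqref{C:1}, which is the hypothesis. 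The same substitution in (ii) produces, after the identical trigonometric cancellation and after invoking \eqref{C:1} once more, a function of $b$ alone equal to $\rho km^2 e^{2kb}-g\rho$, which is precisely the $b$-derivative of the $\omega=0$ specialisation of \eqref{pre}.

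No genuine obstacle is expected in this lemma: once the Lagrangian reformulation above is in place, it reduces to a routine computation of derivatives of \eqref{Lag}. The single point worth emphasising is that $X_{tt}$ and $Z_{tt}$ are both independent of $c$, because $c$ enters \eqref{Lag} only through the linear-in-$t$ term $(c-m)t$ in $X$, whose second $t$-derivative vanishes; this is exactly why the identity is valid for every $c\in\R$, as asserted. Conceptually, condition (i) expresses that the curves $b=\mathrm{const}$ are isobars of the flow in the Lagrangian description, and it is this geometric requirement that selects the dispersion relation $m^2=g/k$ in the absence of Coriolis forcing.
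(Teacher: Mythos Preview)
Your proof is correct and follows essentially the same approach as the paper's: both reformulate the Euler equations in Lagrangian variables as the pair $P_a=-\rho[X_{tt}X_a+(Z_{tt}+g)Z_a]$ and $P_b=-\rho[X_{tt}X_b+(Z_{tt}+g)Z_b]$ (the paper does this for general $\omega$, treating Lemmas \ref{L:2a} and \ref{L:2b} together), then compute that the $P_a$-identity forces $km^2=g$ while the $P_b$-identity reproduces the derivative of \eqref{pre}. One trivial remark: in your step (i) the cancellation is of the mixed $\sin\cos$ cross terms rather than via $\sin^2+\cos^2=1$ (that identity is what collapses step (ii)), but this does not affect the argument.
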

We emphasize that when Coriolis effects  are neglected, Lemma \ref{L:2a} ensures that  there are no restrictions on the wave speed $c.$
Note that  we recover from \eqref{Lag} and \eqref{C:1}, when $m=c=\sqrt{g/k}$,  Gerstner's solution studied in \cite{Co01,He08}. 

 \begin{lemma}\label{L:2b} If $\omega>0,$ the speed of the wave is given by
\begin{equation}\label{speed}
 c:=\frac{g-km^2}{2\omega}
\end{equation}
 and, letting $P$ be the function defined by \eqref{pre} (with $c$ given by \eqref{speed}),
the first two equations of \eqref{Pb} are satisfied for all $t\geq0 $.
\end{lemma}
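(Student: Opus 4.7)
The plan is to verify the first two equations of \eqref{Pb} in Lagrangian coordinates, exploiting the fact that $P$ depends on $(X,Z)$ only through $b$. Writing $\xi:=k(a-mt)$ and differentiating \eqref{Lag} I would first record the velocity field
\[
u=X_t=(c-m)+me^{kb}\cos\xi,\qquad w=Z_t=me^{kb}\sin\xi,
\]
and the material accelerations, which in Lagrangian form are simply $X_{tt}=km^2e^{kb}\sin\xi$ and $Z_{tt}=-km^2e^{kb}\cos\xi$. Since $u_t+uu_X+wu_Z=X_{tt}$ and analogously for $w$, the first two equations of \eqref{Pb} become
\begin{equation*}
X_{tt}+2\omega w=-P_X/\rho,\qquad Z_{tt}-2\omega u=-P_Z/\rho-g.
\end{equation*}

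Next I would compute the Jacobian entries $X_a=1-e^{kb}\cos\xi$, $X_b=-e^{kb}\sin\xi$, $Z_a=-e^{kb}\sin\xi$, $Z_b=1+e^{kb}\cos\xi$, obtaining $\det\p\Phi=1-e^{2kb}$ (already used in Lemma \ref{L:1}). The inverse function theorem then yields
\[
b_X=\frac{e^{kb}\sin\xi}{1-e^{2kb}},\qquad b_Z=\frac{1-e^{kb}\cos\xi}{1-e^{2kb}}.
\]
Since $P$ in \eqref{pre} is a function of $b$ alone, I have $P_X=P_b\,b_X$ and $P_Z=P_b\,b_Z$ with
\[
P_b=\rho m(km+2\omega)e^{2kb}+\bigl[2\omega\rho(c-m)-g\rho\bigr].
\]

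Substituting everything into the first Euler equation, the left-hand side equals $m(km+2\omega)e^{kb}\sin\xi$ and, after clearing the factor $e^{kb}\sin\xi/(1-e^{2kb})$ on the right, the equation collapses to the single scalar identity
\begin{equation*}
m(km+2\omega)+2\omega(c-m)=g,\qquad\text{i.e.}\qquad km^2+2\omega c=g,
\end{equation*}
which is equivalent to \eqref{speed}. This is the only place where the value of $c$ is forced; with $c$ chosen as in \eqref{speed} the constant term $A+B$ in $P_b$ (with $A:=m(km+2\omega)$ and $B:=2\omega(c-m)-g$) vanishes, so $P_b=A\rho(e^{2kb}-1)$. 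I would then plug this back into the second Euler equation: its right-hand side becomes $A(1-e^{kb}\cos\xi)-g$, while the left-hand side is $-Ae^{kb}\cos\xi-2\omega(c-m)$, and the two agree thanks to the very same relation $A+B=0$. Thus both equations reduce to the single condition \eqref{speed}, which completes the verification.

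The computations are routine; the only genuine step is the algebraic cancellation that makes the second equation automatic once $c$ is fixed by the first. The mild conceptual obstacle is simply being careful with the inverse function theorem when passing from derivatives with respect to $(a,b)$ to derivatives with respect to $(X,Z)$, and recognizing that the choice of $c$ in \eqref{speed} is precisely what makes the $b$-independent part of $P_b$ vanish, so that the $\xi$-dependent and $\xi$-independent terms balance simultaneously in both equations.
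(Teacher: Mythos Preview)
Your proof is correct and follows essentially the same strategy as the paper: both verify the Euler equations in Lagrangian variables and find that they reduce to the single algebraic constraint $km^2+2\omega c=g$, which is \eqref{speed}. The only organizational difference is that the paper pulls the Euler equations back to the $(a,b)$-plane by multiplying by the Jacobian (obtaining conditions on $P_a$ and $P_b$, from which it \emph{derives} \eqref{pre}), whereas you take $P$ from \eqref{pre} as given and push forward via the inverse Jacobian to check $P_X$ and $P_Z$ directly; the two computations are algebraically equivalent.
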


Thus, in the context of geophysical waves,  the wave speed is determined by $m$ and $k$, but the parameter $m$ is free now.
While the geophysical waves \eqref{Lag} may travel from east to west at any speed, as $c$ can be chosen to be any negative number, cf. \ref{speed}, the speed of propagation for waves moving  from west to east
is bounded by $g/(2\omega).$     
In the limit case $c=g/(2\omega),$ all particles within the fluid are stagnation points as they move horizontally with the same speed as the wave, cf. Proposition \ref{P:1}.

We shall prove both Lemmas \ref{L:2a} and \ref{L:2b} simultaneously.
\begin{proof}[Proof of Lemmas \ref{L:2a} and \ref{L:2b}]
Similar arguments to those in \cite{AM12xx} show that we are left to determine a function $P $ which has the property that it is constant on the wave surface and
\begin{align}
 P_a&=-\rho(X_{tt}+2\omega Z_t)X_a-\rho(Z_{tt} -2 \omega X_t+g) Z_a, \label{p1}\\
 P_b&= -\rho(X_{tt}+2 \omega Z_t)X_b-\rho(Z_{tt}-2 \omega  X_t+g) Z_b \label{p2}
\end{align}
for all $t\geq0$ and $(a,b)\in\Sigma.$
Using \eqref{Lag}, the relation   \eqref{p1} is equivalent to 
\[
P_a=-\rho e^{kb}\sin(k(a-mt))\left(km^2+2\omega c-g\right)
\] 
for all $t\geq0$ and $(a,b)\in\Sigma.$
Since the pressure should be constant $P=P_0$ at the wave surface $b=b_0,$ we have to impose that 
\begin{align*}
 km^2+2\omega c-g=0,
\end{align*}
and we arrive at \eqref{C:1} or \eqref{speed}.
With this choice, we see that $P_a\equiv0$ in $\Sigma.$
Concerning \eqref{p2}, we find by direct  computation  that
\[
P_b=\rho m(km+2\omega)e^{2kb}+2\omega\rho (c-m)-g\rho,
\]
which leads us to relation \eqref{pre}.
\end{proof}

We prove next that $u, v$, and $P$ exhibit a $(t,X)$ dependence of the form $(X-ct)$ and that $(u,w)$ satisfies the far-field condition \eqref{B2}.
The constant $U=c-m$ will be identified as  being the horizontal velocity of the uniform background current.
 \begin{lemma}\label{L:3} We have that
 \[ (u,w,P)(t,X,Z)=(u,w,P)(0,X-ct,Z)\]
for all   $(X,Z)\in\0(t) $, $t\geq0$,  and
\[
(u,w)\to(c-m,0)\qquad\text{for $Z\to-\infty$ uniformly in $(t,X).$}
\]
\end{lemma}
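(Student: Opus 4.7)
The plan is to exploit a shift invariance built into the Lagrangian parametrization \eqref{Lag}. First I would differentiate \eqref{Lag} in time to get the velocities in Lagrangian coordinates,
\begin{align*}
X_t(t,a,b) &= (c-m) + m\, e^{kb}\cos(k(a-mt)),\\
Z_t(t,a,b) &= m\, e^{kb}\sin(k(a-mt)),
\end{align*}
and observe that $(X_t,Z_t)$ depends on $(t,a)$ only through the combination $a-mt$, while the pressure \eqref{pre} depends solely on $b$. Next, setting $\wt a:=a-mt$, one reads off from \eqref{Lag} the identity
\[
\bigl(X(t,a,b)-ct,\;Z(t,a,b)\bigr)\;=\;\Phi(0)(\wt a,b),
\]
valid for all $(a,b)\in\Sigma$ and $t\geq0$.

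Combined with the bijectivity of $\Phi(t)$ and $\Phi(0)$ established in Lemma~\ref{L:1}, this identity yields the key shift relation
\[
\Phi(t)^{-1}(X,Z)\;=\;\Phi(0)^{-1}(X-ct,Z)+(mt,0)
\]
for every $(X,Z)\in\0(t)$. Inserting this into \eqref{velocity} and into \eqref{pre}, the evaluation variables $a-mt$ and $b$ at the point $(t,X,Z)$ coincide exactly with the evaluation variables $a$ and $b$ at the point $(0,X-ct,Z)$. Since $X_t$, $Z_t$ and $P$ depend on $(t,a,b)$ only through $a-mt$ and $b$, the first assertion $(u,w,P)(t,X,Z)=(u,w,P)(0,X-ct,Z)$ follows immediately.

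For the asymptotic condition, I would use the second equation of \eqref{Lag} together with $b\leq b_0\leq0$ to obtain the uniform bound
\[
|Z-b|\;\leq\;\frac{e^{kb_0}}{k}\qquad\text{for all }(t,a,b)\in[0,\infty)\times\Sigma.
\]
Writing $b=b(t,X,Z)$ for the $b$-component of $\Phi(t)^{-1}(X,Z)$, this bound forces $b\to-\infty$ as $Z\to-\infty$, uniformly in $(t,X)$. Plugging into the explicit formulas for $X_t$ and $Z_t$ gives
\[
|u(t,X,Z)-(c-m)|\;\leq\;|m|\,e^{kb},\qquad |w(t,X,Z)|\;\leq\;|m|\,e^{kb},
\]
so that $(u,w)\to(c-m,0)$ uniformly in $(t,X)$ as $Z\to-\infty$. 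There is no real obstacle: the argument is almost entirely algebraic once the substitution $\wt a=a-mt$ is made, and the only subtlety is invoking Lemma~\ref{L:1} to legitimately relate the two inverse maps $\Phi(t)^{-1}$ and $\Phi(0)^{-1}$ at shifted arguments.
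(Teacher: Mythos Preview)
Your proof is correct and follows essentially the same approach as the paper: both reduce the first assertion to the identity $\Phi(t)^{-1}(X,Z)=\Phi(0)^{-1}(X-ct,Z)+(mt,0)$, which in turn is equivalent to $X(t,a,b)-ct=X(0,a-mt,b)$ and $Z(t,a,b)=Z(0,a-mt,b)$, and both derive the far-field behavior directly from \eqref{Lag} and \eqref{velocity}. Your treatment of the far-field condition is more explicit than the paper's (you actually write down the uniform bound $|Z-b|\leq e^{kb_0}/k$ and the estimates $|u-(c-m)|,|w|\leq |m|e^{kb}$), but this is just added detail, not a different argument.
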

\begin{proof} Given $(X,Z)\in\0(t),$ we infer from Lemma \ref{L:1} that $(X-ct,Z)\in\0(0),$
so that the relation which is to be proved is well-defined.
 Setting $\Phi(t)^{-1}=(a(t),b(t)),$ it  follows readily from \eqref{Lag} and \eqref{pre} 
that it suffices to establish that
 \[a(t,X,Z)-mt=a(0,X-ct,Z)\qquad\text{and}\qquad b(t,X,Z)=b(0,X-ct,Z)\]
for all $t\geq0$ and $(X,Z)\in\0(t).$
However, the latter identities are equivalent to 
\[X(t,a,b)-ct=X(0,a-mt,b) \qquad\text{and}\qquad Z(t,a,b)=Z(0,a-mt,b) \]
for all $t\geq0$ and $(a,b)\in\Sigma,$ and these properties follows easily from \eqref{Lag}.

From \eqref{Lag} and \eqref{velocity} we obtain the far-field condition, and the proof is completed.
 \end{proof}

We show now that the flow is incompressible.
\begin{lemma}\label{L:4} The flow is incompressible, that is for all $t\geq0$ we have that
\[u_X+w_Z=0\text{\qquad in $\0(t).$}\] 
\end{lemma}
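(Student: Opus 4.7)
The plan is to pull the incompressibility condition back to the Lagrangian variables $(a,b)\in\Sigma$, where the explicit formulas \eqref{Lag} reduce everything to elementary computation. Concretely, I would introduce the Jacobian
\[
J(t,a,b) := \det \p \Phi(t,a,b) = X_a Z_b - X_b Z_a,
\]
and exploit the classical identity
\[
(u_X + w_Z)\bigl(t, \Phi(t,a,b)\bigr) = \frac{\p_t J(t,a,b)}{J(t,a,b)}.
\]
To derive it, I would differentiate the defining relations $u(t,\Phi(t,a,b)) = X_t(t,a,b)$ and $w(t,\Phi(t,a,b)) = Z_t(t,a,b)$ with respect to $a$ and $b$, and then use the entries of $(\p\Phi(t))^{-1}$, namely $a_X = Z_b/J$, $b_X = -Z_a/J$, $a_Z = -X_b/J$, $b_Z = X_a/J$, to rewrite $u_X + w_Z$ as
\[
\frac{1}{J}\bigl(X_{ta}Z_b - X_{tb}Z_a - Z_{ta}X_b + Z_{tb}X_a\bigr) = \frac{1}{J}\,\p_t(X_a Z_b - X_b Z_a).
\]
The problem therefore reduces to showing $\p_t J \equiv 0$ on $\Sigma$.

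Next I would compute $J$ directly from \eqref{Lag}. Setting $\theta:=k(a-mt)$ one obtains $X_a = 1 - e^{kb}\cos\theta$, $X_b = -e^{kb}\sin\theta$, $Z_a = -e^{kb}\sin\theta$, $Z_b = 1 + e^{kb}\cos\theta$, and consequently
\[
J = (1 - e^{kb}\cos\theta)(1 + e^{kb}\cos\theta) - e^{2kb}\sin^2\theta = 1 - e^{2kb}.
\]
This is exactly the expression already encountered in the proof of Lemma \ref{L:1}. Since $J$ depends only on $b$, we have $\p_t J \equiv 0$ on $\Sigma$, and the preceding identity delivers $u_X + w_Z = 0$ at every point of $\0(t)$.

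There is no substantial obstacle: the only non-algebraic ingredient is the chain-rule identity relating the Eulerian divergence to $\p_t \log J$, which is a standard property of flow maps. The remaining verification is a short trigonometric computation whose outcome has already been harvested in Lemma \ref{L:1}.
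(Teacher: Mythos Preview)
Your argument is correct and follows essentially the same route as the paper: both pull $u_X+w_Z$ back to Lagrangian variables via the chain rule and arrive at the same numerator $X_{ta}Z_b-X_{tb}Z_a-Z_{ta}X_b+Z_{tb}X_a$ over $\det\p\Phi(t)$. The only difference is organizational: the paper verifies directly from \eqref{Lag} that this numerator vanishes, whereas you recognize it as $\p_t J$ and use the already-computed $J=1-e^{2kb}$ from Lemma~\ref{L:1}, which is a slightly cleaner way to see the cancellation.
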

\begin{proof}
 Using the same notation $\Phi(t)^{-1}=(a(t),b(t))$ as in the proof of Lemma \ref{L:3},
we obtain by the chain rule that
\begin{align*}
u_X+w_Z&=X_{ta}a_X+X_{tb}b_X+Z_{ta}a_Z+Z_{tb}b_Z=\frac{X_{ta}Z_b-X_{tb}Z_a-Z_{ta}Z_b+Z_{tb}X_a}{\det \p\Phi(t)}=0,
\end{align*}
cf. \eqref{Lag}.
\end{proof}

Next, we show that if $m\neq0$, then the flow is rotational and that the vorticity decreases rapidly with the depth.
For $m=0$ the particles move horizontally  with the wave speed, meaning that the flow is irrotational. 
\begin{lemma}\label{L:5}
 The vorticity of the flow is given by the relation
\begin{align}\label{vort}
 \gamma=-\frac{2kme^{2kb}}{1-e^{2kb}} \qquad\text{for $b\leq b_0$}.
\end{align}
\end{lemma}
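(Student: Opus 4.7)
The plan is to compute the vorticity $\gamma := u_Z - w_X$ directly from the Lagrangian parametrization~\eqref{Lag}, following the same pattern as the proof of Lemma \ref{L:4}. Writing $\Phi(t)^{-1}=(a(t),b(t))$ as before and applying the chain rule to $u=X_t(t,a(t),b(t))$ and $w=Z_t(t,a(t),b(t))$, I get
\[
u_Z-w_X=X_{ta}a_Z+X_{tb}b_Z-Z_{ta}a_X-Z_{tb}b_X.
\]
Inverting the Jacobian matrix $\partial\Phi(t)$ yields $a_X=Z_b/\det\partial\Phi$, $a_Z=-X_b/\det\partial\Phi$, $b_X=-Z_a/\det\partial\Phi$, $b_Z=X_a/\det\partial\Phi$, so
\[
u_Z-w_X=\frac{X_{tb}X_a-X_{ta}X_b+Z_{tb}Z_a-Z_{ta}Z_b}{\det\partial\Phi(t)}.
\]

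Next I would read off from \eqref{Lag} the partial derivatives (writing $\phi:=k(a-mt)$ and $E:=e^{kb}$ for brevity): $X_a=1-E\cos\phi$, $X_b=-E\sin\phi$, $Z_a=-E\sin\phi$, $Z_b=1+E\cos\phi$, together with $X_{ta}=-kmE\sin\phi$, $X_{tb}=kmE\cos\phi$, $Z_{ta}=kmE\cos\phi$, $Z_{tb}=kmE\sin\phi$. The denominator was already computed in the proof of Lemma \ref{L:1}: $\det\partial\Phi(t)=1-E^2=1-e^{2kb}$, which is strictly positive for $b\le b_0\le 0$.

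The only real computation is plugging these expressions into the numerator and simplifying. The cross terms proportional to $\cos\phi$ alone (the $km E\cos\phi$ contributions from $X_{tb}X_a$ and $-Z_{ta}Z_b$) cancel, while the remaining four terms combine, via $\sin^2\phi+\cos^2\phi=1$, into $-2kmE^2$. Substituting back gives
\[
\gamma=u_Z-w_X=\frac{-2kme^{2kb}}{1-e^{2kb}},
\]
which is \eqref{vort}. I expect the only mild obstacle to be the trigonometric bookkeeping in the numerator; but since the $E\cos\phi$ and $E\sin\phi$ pieces pair up symmetrically (the matrices in $a,b$ for $(X_t,Z_t)$ and for $(X,Z)$ share the same sine/cosine structure shifted by a factor $kmE$), the simplification is immediate. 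No restriction on $\omega$ or on $c$ enters the computation, in agreement with the fact that the vorticity is a purely kinematic quantity determined by~\eqref{Lag}.
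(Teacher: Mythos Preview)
Your proof is correct and follows exactly the same approach as the paper: the paper writes
\[
\gamma=u_Z-w_X=\frac{X_{tb}X_a-X_{ta}X_b-Z_{ta}Z_b+Z_{tb}Z_a}{\det \partial\Phi(t)}=-\frac{2kme^{2kb}}{1-e^{2kb}}
\]
and simply says ``by direct computation,'' whereas you spell out the chain rule, the Jacobian inversion, and the trigonometric simplification of the numerator. The content is identical.
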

\begin{proof}
It follows from \eqref{Lag} and \eqref{velocity} by direct computation that
\begin{align*}
\gamma=u_Z-w_X&=\frac{X_{tb}X_a-X_{ta}X_b-Z_{ta}Z_b+Z_{tb}Z_a}{\det \p\Phi(t)}=-\frac{2kme^{2kb}}{1-e^{2kb}}.
\end{align*}
\end{proof}

We notice from relation \eqref{vort} that the vorticity function depends only of the $sign$ of $m$ when $m=0$, 
whereas  for geophysical waves it depends upon the difference $U-c$. 

Finally, we are left to check the second last relation of \eqref{Pb}.  

\begin{lemma}\label{L:8}
The kinematic  condition 
 \[w=\eta_t+u\eta_X \quad \text{on $ Z=\eta(t,X)$}\]
is satisfied for all $t\geq0.$
\end{lemma}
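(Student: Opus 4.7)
The plan is to exploit the Lagrangian description of the free surface directly. By Lemma \ref{L:1}, the wave surface at time $t$ is the graph $\{(X,\eta(t,X)):X\in\R\}$, and it is traced out exactly by the curve $a\mapsto (X(t,a,b_0),Z(t,a,b_0))$. Consequently, for every $a\in\R$ and $t\ge 0$ we have the identity
\begin{equation*}
Z(t,a,b_0)=\eta(t,X(t,a,b_0)).
\end{equation*}
This is the only structural fact I need; everything else is a calculation.

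Next, I would differentiate this identity in $t$ at fixed $a$. The chain rule gives
\begin{equation*}
Z_t(t,a,b_0)=\eta_t(t,X(t,a,b_0))+\eta_X(t,X(t,a,b_0))\,X_t(t,a,b_0).
\end{equation*}
Now fix a point $(X,Z)$ on the surface, i.e.\ $Z=\eta(t,X)$. Since $\Phi(t)$ maps the boundary $\partial\Sigma=\{b=b_0\}$ bijectively onto the free surface (again by Lemma \ref{L:1}), there is a unique $a$ with $\Phi(t)(a,b_0)=(X,Z)$. By the definition of the velocity field in \eqref{velocity}, this gives $u(t,X,Z)=X_t(t,a,b_0)$ and $w(t,X,Z)=Z_t(t,a,b_0)$, so substituting into the identity above yields precisely
\begin{equation*}
w=\eta_t+u\,\eta_X\qquad\text{on }Z=\eta(t,X).
\end{equation*}

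I do not foresee any real obstacle: the kinematic boundary condition is essentially built into the Lagrangian ansatz, because \eqref{Lag} tracks individual fluid particles, and particles which start on the surface $b=b_0$ remain on the surface for all time. The only point that has to be invoked carefully is that $\Phi(t)$ restricts to a bijection from $\{b=b_0\}$ onto the graph $Z=\eta(t,X)$, which is exactly the content of the last assertion in Lemma \ref{L:1}. No further computation involving the explicit sine/cosine expressions in \eqref{Lag}, the pressure \eqref{pre}, or the constraint \eqref{speed} is required at this step.
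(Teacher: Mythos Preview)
Your argument is correct, and it differs from the paper's own proof. The paper first uses the traveling-wave form $\eta(t,X)=\eta(X-ct)$ from Lemma~\ref{L:1} to rewrite the kinematic condition as $w=(u-c)\eta_X$, then computes $\eta_X=Z_a/X_a$ at $b=b_0$ explicitly from \eqref{Lag} and checks by a direct sine/cosine computation that $w-(u-c)\eta_X=0$. Your route is the standard Lagrangian one: differentiate the identity $Z(t,a,b_0)=\eta(t,X(t,a,b_0))$ in $t$ and read off $Z_t=\eta_t+\eta_X X_t$, which together with \eqref{velocity} is exactly the kinematic condition. This is cleaner and uses nothing about the specific form of \eqref{Lag}; the paper's explicit computation, on the other hand, gives a concrete formula for $\eta_X$ along the surface as a by-product and serves as an independent check that the trigonometric expressions really do cancel.
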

\begin{proof}
 Using Lemma \ref{L:1}, we need to prove that  $w=(u-c)\eta_X$ on $Z=\eta(t,X)$.
By the definition of $\eta$ we have
\[
\eta_X(X(t,a,b_0))=\frac{Z_a}{X_a}(t,a,b_0)=-\frac{e^{kb_0}\sin(k(a-mt))}{1-e^{kb_0}\cos(k(a-mt))}.
\]
Using \eqref{Lag} and the previous relation, we obtain that
\begin{align*}
 w -(u -c)\eta_X =&me^{kb_0}\sin(ka-kct)\\
&+m(e^{kb_0}\cos(k(a-mt))-1)\frac{e^{kb_0}\sin(k(a-mt))}{1-e^{kb_0}\cos(k(a-mt))}\\
=&0,
\end{align*}
 and therefore the kinematic surface condition is satisfied.
\end{proof}

\section{Particle paths in geophysical waves}\label{Sec:3}
As mentioned in Section \ref{Sec:2}, the wave profile corresponding to the solutions  \eqref{Lag} is independent of the  magnitude of the underlying current  $U=c-m$ and  the value of $m$.
It is the motion of the particles and the speed of the wave which are influenced by these constants.
While for waves without Coriolis effects there is a symmetry with respect to $\sign(m)$,  the particle description for geophysical waves depends non-trivially on both constants $U$ and  $m$. 
For example if there is no underlying current $U=0$ there are two 
solutions in the family \eqref{Lag}: one moving from west to east with wave speed $c=(-\omega+\sqrt{\omega^2+kg})/k,$   and one moving in the opposite direction at larger speed  $c=(\omega+\sqrt{\omega^2+kg})/k.$\footnote{
Note that in the absence of Coriolis effects $\omega=0$, the speed of the two solutions with $U=0$ is the same $c=\sqrt{g/k}.$ 
In fact, in this case, it is the same solution but seen from two different reference frames.
However, when $\omega>0,$ not only that the wave speed and the direction of propagation are different, but also  the angular velocity of the  particles, which move in both cases on circles, is different.} 

We start by examining  the drift experienced by the water particle, that is the horizontal distance moved by the particle between its positions on two consecutive crest lines.
If $m=0$, then particles on a crest line remain always there, and the notion of drift has no meaning. 
Assume now $m\neq0$.
Clearly, the particle parametrized by $(a,b)=(0,0)$ is located at time $t=0$ on a crest line.
 We infer then from \eqref{Lag} that the first time $t>0$ when the particle is located again on a crest line is $t=2\pi/(k|m|)$.  
The  drift of this particle is
\[
d=X(2\pi/k|m|),0,0)-X(0,0,0)=\frac{2\pi U}{k|m|}.
\]
If $(0,b)$ corresponds to  another particle on the crest line $X=0 $ (when $t=0$) we infer from \eqref{Lag} that 
  \[X(2\pi/k|m|),0,b)-X(0,0,b)=\frac{2\pi U}{k|m|},\]
meaning  that $t=2\pi/(k|m|)$ is the first positive time when the particle is located again on a crest line.
Thus, we may conclude the following result.    
\begin{prop}  \label{FD}
Assume that $m\neq0.$    The signed horizontal drift of a particle  within a period is
\begin{equation}\label{eq:displ1}
d=\frac{2\pi U}{km \sign(m)}.
\end{equation}
\end{prop}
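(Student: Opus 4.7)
The plan has three short steps: characterise the crest lines in terms of the Lagrangian labels $(a,b)$, determine the first positive time $t^{*}$ at which a particle initially on a crest returns to a crest, and then read off the horizontal displacement from the $X$-equation of \eqref{Lag}.

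First I would use the $Z$-equation of \eqref{Lag} to observe that, for fixed $b\leq b_0$ and $t$, the function $Z(t,\cdot,b)$ attains its maximum precisely on the set where $\cos(k(a-mt))=1$, i.e.\ where $a-mt\in (2\pi/k)\Z$. Via the diffeomorphism $\Phi(t)$ of Lemma~\ref{L:1}, these points correspond to the crests of the streamline $\{b=\mathrm{const}\}$ (and, when $b=b_0$, of the free surface, which is already identified as a travelling wave of speed $c$ in Lemma~\ref{L:1}). Hence a particle labelled $(a,b)$ sits on a crest line at time $t$ if and only if $a-mt\in (2\pi/k)\Z$.

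Next I would pick a particle $(a,b)$ with $ka\in 2\pi\Z$, so that it starts on a crest at $t=0$. By the previous characterisation, it returns to a crest at the smallest $t>0$ for which $mt\in (2\pi/k)\Z\setminus\{0\}$, which, since $m\neq 0$, is $t^{*}=2\pi/(k|m|)$. Substituting $t=t^{*}$ into the $X$-equation of \eqref{Lag} and exploiting that $kmt^{*}=\pm 2\pi$ makes the sine-term return to its value at $t=0$, only the linear contribution survives, yielding
\[
d = X(t^{*},a,b)-X(0,a,b) = (c-m)\,t^{*} = \frac{2\pi U}{k|m|} = \frac{2\pi U}{km\,\sign(m)}.
\]
This is manifestly independent of $b$ and of the specific crest-label $a$, which confirms that the drift is uniform across the wave.

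The computation is essentially a direct substitution, so I do not anticipate a genuine obstacle. The only point that deserves a line of justification is the minimality of $t^{*}$ among positive return times, which follows immediately from the lattice characterisation of crest labels in the first step: $2\pi/(k|m|)$ is the smallest positive element of $\{2\pi n/(k|m|):n\in\N\}$.
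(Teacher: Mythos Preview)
Your proposal is correct and follows essentially the same approach as the paper: identify the crest condition $a-mt\in(2\pi/k)\Z$, find the first return time $t^{*}=2\pi/(k|m|)$, and read off the drift from the linear term in $X$, the sine term cancelling by periodicity. The paper carries this out first for the particle $(0,0)$ and then checks a general $(0,b)$ separately, whereas you treat $(a,b)$ with $ka\in2\pi\Z$ in one stroke and spell out the crest characterisation via the maximum of $Z$; these are cosmetic differences only.
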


For Stokes waves the drift of   particles depends on their  vertical position in the fluid and it decreases with depth, cf. \cite{CoEhVi08, CoSt10, He06}.
Moreover, in the presence of adverse currents it is shown in \cite{CoSt10} that some of the water particles may move in the direction of the current and other ones in the opposite one. 
For our Gerstner type solutions though, the drift of the particles is constant in the flow and all particles move (loop or undulate) in the same direction as the current.

In order to describe the trajectory traced  by a water particle,  which corresponds in the parameter space $\Sigma$ to the pair $(a,b)$, it is suitable to re-parametrize this curve by rescaling the time as follows 
\begin{equation}\label{var}
\tau:=\sign(m)k(mt-a)\qquad\text{for $m\neq0$}.
\end{equation} 
Note that this change of variables is orientation-preserving.
We find that the curve
\begin{equation}\label{param}
(x(\tau),z(\tau)):=\left(\frac{ac}{m}+\frac{U}{km\sign(m)}\tau+\frac{e^{kb}}{k}\sin(\sign(m)\tau), b+\frac{e^{kb}}{k}\cos(\tau)\right), \qquad\tau\geq0,
\end{equation}
is a parametrization of the path traced  by the particle.
The case $c=m$ corresponds to Gerstner's wave analyzed in   \cite{Co01, He08, AM12xx}, when the particles move on circles.
When $U\neq0,$ the curve parametrized by $(x,z)$ still has a nice geometric interpretation. 
Namely, if  $U\sign(m)>0$, then it represents a  trochoid 
which is locus of a point at distance $e^{kb}/k$ from the center of a circle of radius  $|U|/(k|m|)$
rolling on a fixed line. 
There are three special cases which may occur:
 the point may be located inside the circle and in this case the curve is  real-analytic and is called curtate cycloid;
 the point may be located on the circle and the curve has downwards cusps at $\tau=(2l+1)\pi/2$, $l\in\Z,$ being called cycloid; or  the point is located outside the circle and  the curve has self-intersections and is called prolate cycloid.
On the other hand, if  $U\sign(m)<0$,   the curve traced  by the particle is a reflected trochoid (the refection of a trochoid  with respect to an horizontal line).

In the remainder, we  shall consider the most general case when the parameter $b_0$ in \eqref{regime} is $b_0=0.$

\subsection{Particle paths in the absence of the Coriolis force}

We first describe the fluid motion underlying the waves  given by \eqref{Lag} when $\omega=0.$
In this regime, the motion is completely determined by the underlying current $U$ and the sign of  $m$.\footnote{Given $U\in\R,$ the wave speed $c$ and the value of $m$ are given by the relations
\[m=\sign(m)\sqrt{\frac{g}{k}}\qquad\text{and}\qquad c=U+\sign(m)\sqrt{\frac{g}{k}}.\]}
Furthermore, by Lemma \ref{L:2a}, we may choose $U$ to be any  real number since there is no restriction on the wave speed $c$.    
\begin{figure}
$$\includegraphics[width=12cm, angle =0]{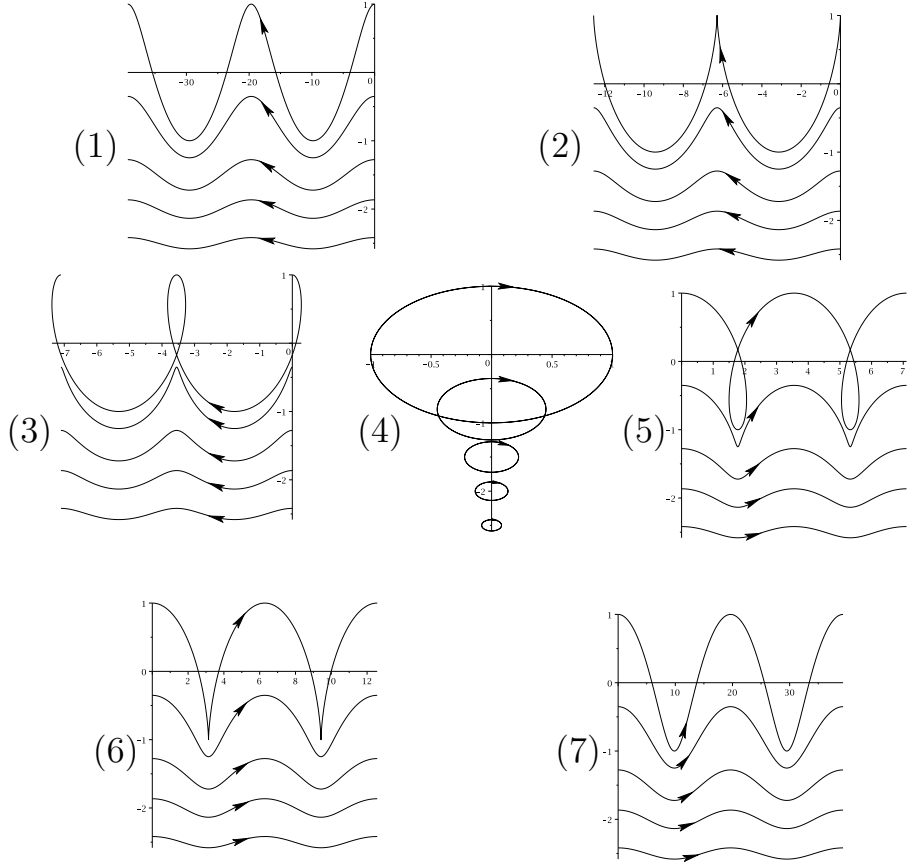}$$
\caption{Particle paths in the absence of the Coriolis force when $k=1$, $m=\sqrt{g},$ and the current $U$ is given by: $(1)$ $ U=-g$; $(2)$ $ U=-\sqrt{g}$; $(3)$ $ U=-g^{1/4}$; $(4)$ $  U=0$; $(5)$ 
$ U=g^{1/4}$; $(6) $ $ U=\sqrt{g}$; $(7)$ $ U=g$. 
In each of the figures $(1)-(7)$ we plotted the trajectories of the particle parametrized by $(a,b)$ over two periods when $a=0$ and $b=0,$ $b=-0.8,$ $b= -1.5,$ $b=-2,$ and $b=-2.5.$
Observe that as $b$ decreases, the vertical displacement in each half-period decreases too.}
\label{F:1}
\end{figure}

\begin{prop}  \label{o=0}
Assume that $\omega=0,$ and let  $U$ be the  velocity of the   horizontal background current.
Furthermore, we assume that   $m=\sqrt{g/k}$ and consider a  particle located in the fluid layer and parametrized by  $(a,b)\in\Sigma$. 
Then, we have:
\begin{itemize}
\item[(a)] In the absence of a  current $U=0,$ the  particle moves clockwise on a circle.
 \item[(b)] If $U>0,$ (resp. $U<0$) then the particle moves on a trochoid (resp. the reflection  of a  trochoid with respect to a vertical line).  
Letting $b_*:=\ln ((kU^2/g)^{1/2k}),$
the   trochoid is of the one of the following types:  prolate cycloid if $b>b_*;$  a cycloid  if $b=b_*$; or  a curtate cycloid if $b<b_*.$ 
 \end{itemize}
If $m=-\sqrt{g/k}$, the situation is similar:  the  particle moves counterclockwise on a circle if $U=0$,
and, if $U<0$ (resp. $U>0$), then it moves on a  trochoid (resp. a reflected   trochoid). 

The particle paths are illustrated in Figure \ref{F:1}.
\end{prop}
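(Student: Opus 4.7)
The plan is to specialize the reparametrized trajectory formula \eqref{param} to the setting $\omega = 0$ and $|m| = \sqrt{g/k}$, and then identify the resulting planar curve by direct inspection. Writing $r := e^{kb}/k$ and $R := U/(k|m|) = U/\sqrt{gk}$, for $m = \sqrt{g/k}$ (so $\sign(m) = 1$) the parametrization \eqref{param} reduces to
\begin{equation*}
x(\tau) = \frac{ac}{m} + R\tau + r\sin\tau, \qquad z(\tau) = b + r\cos\tau,
\end{equation*}
which is, up to a horizontal translation, the textbook parametrization of a trochoid generated by a point at distance $r$ from the center of a circle of (signed) radius $R$ rolling along a horizontal line.

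For part (a), setting $U = 0$ kills the linear term, and the image collapses to the circle $(x - ac/m)^2 + (z-b)^2 = r^2$, whose radius $r = e^{kb}/k$ depends only on the depth parameter $b$. I would identify the orientation by computing the velocity at $\tau = 0$: since $(\dot x(0), \dot z(0)) = (r, 0)$ and the particle sits at the top $(ac/m, b + r)$ of the circle, the motion is clockwise.

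For part (b), I recognize the parametrization as a rolling-circle trochoid with pen radius $r$ and rolling radius $|R|$. The standard trichotomy for trochoids gives the curtate/cycloid/prolate classification according as $r < |R|$, $r = |R|$, or $r > |R|$; solving the threshold equation $e^{kb_*}/k = |U|/\sqrt{gk}$ yields $b_* = \frac{1}{2k}\ln(kU^2/g) = \ln\bigl((kU^2/g)^{1/(2k)}\bigr)$, which matches the statement, and the strict monotonicity of $b \mapsto e^{kb}$ delivers the three cases. The sign of $U$ determines whether the auxiliary circle rolls to the right ($U > 0$, genuine trochoid) or whether \eqref{param} is instead obtained from such a trochoid by the reflection $x \mapsto -x + \mathrm{const}$, which occurs for $U < 0$ since only the linear drift term changes sign.

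The analogous assertions for $m = -\sqrt{g/k}$ follow from the symmetry in \eqref{param} under $\sign(m) \mapsto -\sign(m)$, which replaces $\sin(\sign(m)\tau)$ by $-\sin\tau$ and simultaneously flips the sign of the coefficient of $\tau$; taken together these amount to a vertical-line reflection of the $m > 0$ curve combined with the swap $U \leftrightarrow -U$ in the classification, and in particular they reverse the orientation of the circle in the $U = 0$ case to counterclockwise. I do not anticipate any genuine mathematical obstacle: the entire argument is an exercise in reading the explicit formula \eqref{param}. The only mildly delicate point is keeping the bookkeeping of signs straight so as to correctly determine, in each combination of $\sign(U)$ and $\sign(m)$, whether the trajectory is itself a trochoid or the reflection of one about a vertical line.
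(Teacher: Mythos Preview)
Your proposal is correct and follows exactly the same route as the paper: the paper's proof consists of the single sentence ``The proof is a consequence of \eqref{param} and Lemma \ref{L:2a},'' and you have simply spelled out what that entails. Indeed, the geometric identification of \eqref{param} as a trochoid (for $U\,\sign(m)>0$) or reflected trochoid (for $U\,\sign(m)<0$) is already recorded in the paragraph immediately preceding the proposition, so your derivation of $b_*$ and the orientation check at $\tau=0$ are the only details not made fully explicit in the text.
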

\begin{proof}
 The   proof is a consequence of  \eqref{param} and Lemma \ref{L:2a}. 
\end{proof}

Note that the constant $b_*$  is non-positive, which is the only relevant situation for Proposition \ref{o=0},  if and only if
\begin{equation}\label{eq:a}
 0<|U|\leq\sqrt{g/k}.
\end{equation}
Assuming  \eqref{eq:a}, particles close to the wave surface are looping in the direction of the current, while those located deeper in the fluid undulate in the same direction. 
On the other hand, if  \eqref{eq:a} does not hold and $U \not= 0$, then all the particle undulate in the direction of  the current. 
For $U=-\sign(m)\sqrt{g/k} $ we have  $c=0$ and  $b_*=b_0=0.$ 
When $\sign(m) U\in(0,\sqrt{g/k})$ the current is moving in the opposite direction to the wave (adverse current), otherwise (excepting $U\in\{0,-m\}$)
the current moves in the same direction as the wave.

\subsection{Particle paths for geophysical waves}
We  describe now the paths traced by the particles for the wave solutions \eqref{Lag} in the geophysical context when $\omega>0.$
We  emphasize that for geophysical waves there exists a upper bound on the value of $U$ above which the equations \eqref{Lag} cease to describe solutions of the problem \eqref{Pb1}:
\begin{equation}\label{restr1}
 U\leq \frac{\omega^2+g}{2\omega}.
\end{equation}
  Indeed,  by Lemma \ref{L:2b}, we can determine $U$ in dependence  of $m$ 
\begin{equation*}\label{U}
U=\frac{g-km^2-2\omega m}{2\omega},
\end{equation*}
and this quadratic equation in $m$ has real solutions if and only if \eqref{restr1} is satisfied.
Furthermore, there is also an upper bound on the speed at which the wave may travel eastwards:
\begin{equation}\label{restr2}
 c\leq \frac{ g}{2\omega}.
\end{equation}
While in the case $\omega=0$, the fluid layer associated to the solutions \eqref{Lag} does not contain stagnation points, 
that is particles which travel horizontally with the wave speed, in the geophysical context the situation is different.

\begin{prop}\label{P:1} Assume that $m=0$. 
Then, all the  particles  corresponding to the solution \eqref{Lag} move horizontally eastwards with the wave speed $c=U=g/(2\omega).$
If  $m\neq0$, then the equations \eqref{Lag} describe waves without stagnation points.
\end{prop}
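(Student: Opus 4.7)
\textbf{Proof proposal for Proposition \ref{P:1}.}

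The plan is to compute the particle velocity $(X_t,Z_t)$ directly from \eqref{Lag} and inspect it in the two cases. From \eqref{Lag} one reads off
\begin{equation*}
X_t(t,a,b)=(c-m)+me^{kb}\cos(k(a-mt)),\qquad Z_t(t,a,b)=me^{kb}\sin(k(a-mt)).
\end{equation*}

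For the first assertion, set $m=0$. Then $Z(t,a,b)=b+\tfrac{e^{kb}}{k}\cos(ka)$ is independent of $t$ and $X_t\equiv c$, so every particle moves horizontally with speed $c$. Since $U=c-m=c$, Lemma \ref{L:2b} (formula \eqref{speed}) gives $c=g/(2\omega)$, which identifies the common value $c=U=g/(2\omega)>0$ and hence eastward propagation.

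For the second assertion, assume $m\ne 0$ and suppose for contradiction that $(a,b)\in\Sigma$ corresponds to a stagnation point, i.e.\ $Z_t=0$ and $X_t=c$ at some $t\ge0$. From $Z_t=0$ and $m\ne0$ we deduce $\sin(k(a-mt))=0$, so $\cos(k(a-mt))=\varepsilon$ with $\varepsilon\in\{+1,-1\}$. Substituting into $X_t=c$ yields $m(\varepsilon e^{kb}-1)=0$, that is $e^{kb}=\varepsilon$. Since $b<b_0\le 0$, we have $0<e^{kb}<1$, which is incompatible with both $\varepsilon=+1$ and $\varepsilon=-1$. This contradiction shows that no stagnation points exist in $\Sigma$.

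The argument is essentially a routine substitution; the only point requiring care is the strict inequality $e^{kb}<1$ coming from the strict inclusion $b<b_0\le 0$ in the definition $\Sigma=\R\times(-\infty,b_0)$, which is precisely what rules out the degenerate case. No genuine obstacle is expected.
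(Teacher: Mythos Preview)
Your proof is correct and follows the same approach as the paper: compute $(X_t,Z_t)$ directly from \eqref{Lag} and split into the cases $m=0$ and $m\neq0$. Your derivative formulas are in fact the accurate ones (the paper's displayed expression for $(u,w)$ has $\sin$ and $\cos$ interchanged, which is immaterial to the argument), and you spell out the contradiction in the $m\neq0$ case more explicitly than the paper does, correctly isolating the role of the strict inequality $b<b_0\le0$ in excluding $e^{kb}=1$.
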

\begin{proof} It follows readily from \eqref{Lag} that
\[
(u,w)=(c-m(1-e^{kb}\sin(k(a-mt))), me^{kb}\cos(k(a-mt))). 
\]   
Clearly, if $m=0$, then $(u,w)=(c,0)$  and all the fluid particles are stagnation points. 
If $m\neq0$, there is no pair $(a,b)$ with this property.
The assertion follows now from Lemma \ref{L:2b}.
\end{proof}

\begin{figure}
$$\includegraphics[width=14cm, angle =0]{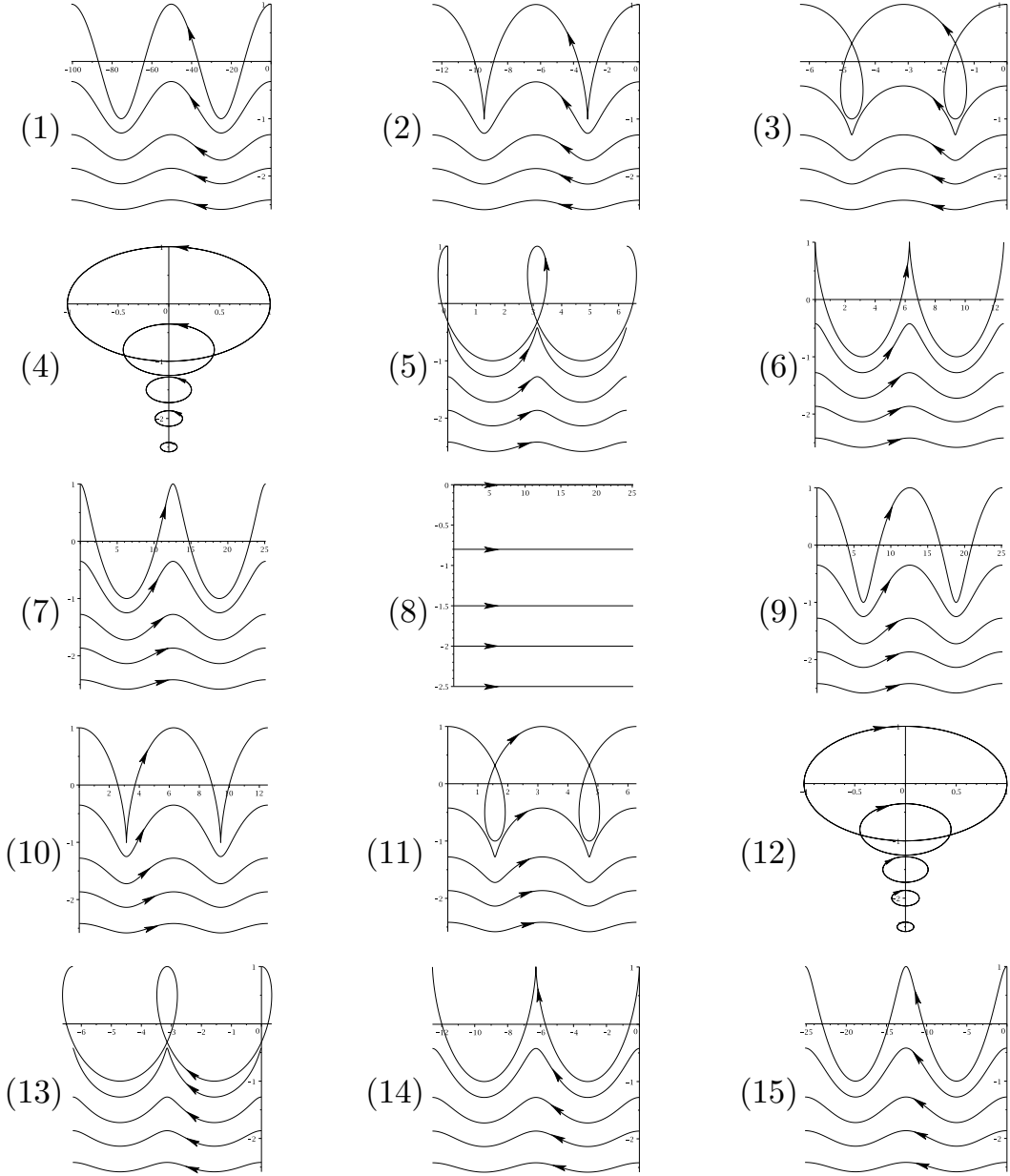}$$
\caption{Particle paths for geophysical waves with $k=1$ when: {\small $(1)$ $m=-9\omega-\sqrt{g}$; $(2)$ $m=-2\omega-\sqrt{4\omega^2+g}$; $(3)$ $m=-3/2\omega-\sqrt{\omega^2+g}$;
$(4)$ $m=-\omega-\sqrt{\omega^2+g}$; $(5)$ $m=-\omega/2-\sqrt{\omega^2+g}$; $(6)$ $m=-\sqrt{g}$; $(7)$ $m=-\omega-\sqrt{g}$;
$(8)$ $m=0$; $(9)$ $m=-3\omega+\sqrt{4\omega^2+g}$; $(10)$ $m=-2\omega+\sqrt{4\omega^2+g}$; $(11)$ $m=-3/2\omega+\sqrt{(3\omega/2)^2+g}$;
$(12) $ $m=-\omega+\sqrt{\omega^2+g}$; $(13)$ $m=-\omega/2+\sqrt{(\omega/2)^2+g}$; $(14)$ $m=\sqrt{g}$; $(15)$ $m=\omega+\sqrt{\omega^2+g}$.}
Each figure  $(1)-(15)$ shows the paths of the particle   $(a,b)$ over two periods, with $(a,b)$ as in Figure \ref{F:1}.}
\label{F:2}
\end{figure}

Since for equatorial geophysical waves $U$ and $c$ are determined by the constant $m,$ it is natural to use $m$ as a parameter when studying the motion of the particles for the solutions \eqref{Lag}.  
We obtain the following  characterization of the paths described by the particles. 
\begin{prop}  \label{o}
Assume that $\omega>0$, let    $(a,b)\in\Sigma$ be the parameter corresponding to a particle in the fluid, and define the constants
\[m_1:=\frac{-\omega- \sqrt{\omega^2+gk}}{k}<m_2:=\frac{-\omega+ \sqrt{\omega^2+gk}}{k}.\] 
Then,  we have:
\begin{itemize}
\item[(a)] If $m=m_1$ (resp. $m=m_2$), then  the  particle moves counterclockwise (resp. clockwise) on a circle.
 \item[(b)] If $m\in(-\infty,m_1)\cup (0,m_2),$ then the particle moves on a   trochoid.  
Letting 
\[b_*:=\frac{1}{k}\ln \left(\frac{g-km^2-2\omega m}{2\omega m}\right)\]
the   trochoid is of one of the following types:  prolate cycloid if $b>b_*;$  a cycloid  if $b=b_*$; or  a curtate cycloid if $b<b_*.$ 
\item[(c)] If $m\in(m_1,0)\cup (m_2,\infty),$ then the particle moves on a reflected trochoid.  
Letting 
\[b^*:=\frac{1}{k}\ln \left(\frac{g-km^2-2\omega m}{-2\omega m}\right)\]
the   trochoid is of one of the following types:  prolate cycloid if $b>b_*;$  a cycloid  if $b=b_*$; or  a curtate cycloid if $b<b_*.$ 
 \end{itemize}

The particle paths are illustrated in Figure \ref{F:2}.
\end{prop}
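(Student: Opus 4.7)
The plan is to read off everything directly from the reparametrization \eqref{param} together with the dispersion relation \eqref{speed} from Lemma \ref{L:2b}.

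First I would rewrite the drift velocity $U = c-m$ using \eqref{speed} as
\[
U = \frac{g - km^{2}}{2\omega} - m = \frac{g - km^{2} - 2\omega m}{2\omega} = -\frac{k}{2\omega}(m - m_{1})(m - m_{2}),
\]
where the factorisation is immediate from $m_{1}+m_{2} = -2\omega/k$ and $m_{1}m_{2} = -g/k$. This identity at once shows that $U$ vanishes precisely when $m\in\{m_{1},m_{2}\}$, and an elementary sign analysis of the cubic $m(m-m_{1})(m-m_{2})$ gives $U/m > 0$ on $(-\infty,m_{1})\cup(0,m_{2})$ and $U/m < 0$ on $(m_{1},0)\cup(m_{2},\infty)$; equivalently, $U\sign(m) > 0$ in regime (b) and $U\sign(m) < 0$ in regime (c).

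For (a) I would simply plug $U=0$ into \eqref{param}: the linear term disappears and the particle traces the circle of radius $e^{kb}/k$ centred at $(ac/m,b)$. The orientation is read off from the sign in $\sin(\sign(m)\tau)$: for $m=m_{2}>0$ a small increase of $\tau$ moves the particle to the right while $z$ decreases, i.e. clockwise; for $m=m_{1}<0$ the sign flips and the motion is counterclockwise.

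For (b) and (c) I would appeal to the geometric description of \eqref{param} already recorded in Section \ref{Sec:3}: when $U\sign(m)>0$ the curve is a trochoid generated by a circle of radius $R:=|U|/(k|m|)$ rolling on a line, with the tracing point at distance $d:=e^{kb}/k$ from the centre; when $U\sign(m)<0$ the same picture arises but reflected in a horizontal line. The curtate/cycloid/prolate trichotomy is then $d<R$, $d=R$, $d>R$, i.e.
\[
e^{kb} \lessgtr \frac{|U|}{|m|}.
\]
In case (b) one has $|U|/|m| = U/m = (g-km^{2}-2\omega m)/(2\omega m)$, and taking logarithms recovers precisely $b_{*}$. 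In case (c) one has $|U|/|m| = -U/m = (g-km^{2}-2\omega m)/(-2\omega m)$, giving $b^{*}$.

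I do not anticipate a serious obstacle: the substance of the statement is already contained in \eqref{param} and Lemma \ref{L:2b}, and the only genuine bookkeeping is the sign analysis of $U/m$ that separates regimes (a)--(c). The mildly delicate point is to keep track of $\sign(m)$ when comparing $|U|/|m|$ to $U/m$, so that the two formulae for $b_{*}$ and $b^{*}$ come out with the correct signs in the logarithms.
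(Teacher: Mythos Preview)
Your proposal is correct and follows exactly the route the paper takes: the paper's own proof consists of the single sentence ``The proof follows from Lemma~\ref{L:2b} and relations~\eqref{param},'' and what you have written is precisely the bookkeeping (the formula $U=(g-km^{2}-2\omega m)/(2\omega)$, the sign analysis of $U\sign(m)$, and the comparison $e^{kb}\lessgtr |U|/|m|$) that the paper leaves implicit.
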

\begin{proof}
 The proof follows from the Lemma \ref{L:2b} and relations \eqref{param}.
\end{proof}

 The relevant case $b_*\leq0$ occurs in the case $(b)$ if and only if $m\in[m_3,m_1)\cup [m_4,m_2)$, and in the case $(c)$ we have $b^ *\leq0$  if and only if $m\in(m_1,-\sqrt{g/k}]\cup(m_2,\sqrt{g/k}],$
whereby $m_{3/4} $ are given by
\[
m_3:=\frac{-2\omega- \sqrt{4\omega^2+gk}}{k}\qquad\text{and}\qquad m_4:=\frac{-2\omega+ \sqrt{4\omega^2+gk}}{k}.
\]
The current is favorable when $m\in (-\infty,m_1)\cup(-\sqrt{g/k},m_2)\cup(\sqrt{g/k},\infty) $ and we have a  adverse current when $m\in(m_1,-\sqrt{g/k})\cup(m_2,\sqrt{g/k}).$
 If $ (m,c,U) =(\sqrt{g/k},0,-\sqrt{g/k})$ or $(m,c,U)=(-\sqrt{g/k},0,\sqrt{g/k})$, then the equations \eqref{Lag} describe solutions in both cases $m=0$ and $m>0$.
In fact, these are the only solutions in the family \eqref{Lag} which are valid in both contexts (see Figure \ref{F:1} and Figure \ref{F:2} for the paths of the particles in these cases).

\vspace{0.3cm}
\hspace{-0.5cm}{\large \bf Acknowledgment} 
This research has been supported by the FWF Project I544 --N13 ``Lagrangian kinematics of water waves'' of the Austrian Science Fund.\\

\bibliographystyle{abbrv}
\bibliography{AM.bib}
\end{document}